\newtheorem{thm}{Theorem}
\newtheorem{cor}{Corollary}
\newtheorem{prop}{Proposition}
\newtheorem{lem}{Lemma}
\theoremstyle{definition}
\author{Beata Der\k{e}gowska}
\email{beata.deregowska@up.krakow.pl}
\author{Pawe{\l} Pasteczka}
\email{pawel.pasteczka@up.krakow.pl}
\address{Institute of Mathematics, Pedagogical University of Krak\'ow,  Podchor\k{a}\.{z}ych 2, 30-084 Cracow, Poland}
\keywords{Iteration process, invariant means, quasi-arithmetic means, integral means, mean-type mapping.}
\subjclass[2010]{26E60, 39B12, 39B52}
\numberwithin{equation}{section}
\newcommand\nolabel[1]{\nonumber}
\newcommand\bff{\mathbf{f}}
\newcommand\A{\mathscr{A}}
\newcommand\R{\mathbb{R}}
\renewcommand\P{\mathbb{P}}
\newcommand\N{\mathbb{N}}
\newcommand{\QA}[1]{\mathscr{A}^{[#1]}}
\newcommand{\calP}{\mathcal{P}}
\newcommand{\PM}{\mathscr{P}}
\newcommand{\cF}{\mathcal{F}}
\newcommand{\cG}{\mathcal{G}}
\newcommand{\cFlim}{\mathcal{T}}
\newcommand{\Lo}{\mathscr{L}}
\newcommand{\Up}{\mathscr{U}}
\newcommand{\abs}[1]{\left| #1 \right| }
\newcommand{\TcF}{\mathbf{A}_{\cF}}
\newcommand{\TcG}{\mathbf{A}_{\cG}}
\DeclareMathOperator{\supp}{supp}
\DeclareMathOperator{\Var}{Var}
\DeclareMathOperator{\CM}{\mathcal{CM}}
\DeclareMathOperator{\M}{M}
\numberwithin{equation}{section}
\def\eq#1{{\rm(\ref{#1})}}
\def\Eq#1#2{\ifthenelse{\equal{#1}{*}}
  {\begin{equation*}\begin{aligned}[]#2\end{aligned}\end{equation*}}
  {\begin{equation}\begin{aligned}[]\label{#1}#2\end{aligned}\end{equation}}}
\title[Invariant means on probability space]{Quasiarithmetic-type invariant means on probability space}
\begin{document}
\maketitle
\begin{abstract}

For a family $(\mathscr{A}_x)_{x \in (0,1)}$ of integral quasiarithmetic means sattisfying certain measurability-type assumptions we search for an integral mean $K$ such that 
$K\big((\mathscr{A}_x(\mathbb{P}))_{x \in (0,1)}\big)=K(\mathbb{P})$ for every compactly supported probabilistic Borel measure $\mathbb{P}$.

Also some results concerning the uniqueness of invariant means will be given.
\end{abstract}

\section{Introduction}

For a continuous, strictly monotone function $f \colon I \to \R$ ($I$ is an interval) define a (discrete) \emph{quasiarithmetic mean} $\QA{f} \colon \bigcup_{k=1}^\infty I^k \to I$ by
\Eq{*}{
\QA{f}(x_1,\dots,x_k):=f^{-1} \Big( \frac{f(x_1)+\dots+f(x_k)}n \Big), 
}
where $k \in \N$ and $x_1,\dots,x_k\in I$. This notion was introduced in 1930s by Aumann, Knopp \cite{Kno28} and Jessen independently and then characterized by Kolmogorov \cite{Kol30}, Nagumo \cite{Nag30} and de Finetti \cite{Def31}.
For the detail concerning the early history of this family we refer the reader to the book of Hardy-Littlewood-P\'olya \cite{HarLitPol34}. From now on, a family of all continuous, strictly monotone functions on the interval $I$ will be denoted by $\CM(I)$.

It is well known that for $\pi_p\colon \R_+ \to \R$ given by $\pi_p(x):=x^p$ if $p\ne 0$ and $\pi_0(x):=\ln x,$ the quasiarithmetic mean $\QA{\pi_p}$ is a \emph{$p$-th power mean} $\PM_p$. Remarkably, the mean $\PM_1$ is the arithmetic mean.

For a vector $\bff=(f_1,\dots,f_k)$ of functions in $\CM(I)$ one can define a selfmapping $\QA{\bff} \colon I^k \to I^k$ by
\Eq{E:QAit}{
\QA{\bff}(x_1,\dots,x_k):=\big(\QA{f_1}(x_1,\dots,x_k),\dots,\QA{f_k}(x_1,\dots,x_k) \big).
}
Based on a classical results by Borwein-Borwein \cite[Theorem 8.8]{BorBor87} it is known that there exists exactly one \emph{$\QA{\bff}$-invariant mean}, that is a mean $K \colon I^k \to I$ (a function satisfying the inequality $\min(x) \le K(x) \le \max(x)$ for all $x \in I^k$) such that $K \circ \QA{\bff}=K$.
Furthermore the sequence of iterations of $\QA{\bff}$ tends to $(K,\dots,K)$ pointwise.

Inviant means in a family of quasi-arithmetic means were studied by many authors, for example Burai \cite{Bur07a}, Dar\'oczy--P\'ales \cite{DarPal02b}, J.~Jarczyk \cite{Jar07}, J.~Jarczyk and Matkowski \cite{JarMat06}. In fact invariant means were extensively studied during recent years, see for example the papers by Baj\'ak--P\'ales \cite{BajPal09b,BajPal09a,BajPal10,BajPal13}, by Dar\'oczy--P\'ales \cite{Dar05a,DarPal02c,DarPal03a}, by G{\l}azowska \cite{Gla11b,Gla11a}, by Matkowski \cite{Mat99b,Mat02b,Mat05}, by Matkowski--P\'ales \cite{MatPal15}, by Pasteczka \cite{Pas16a,Pas19a,Pas18b} and Matkowski--Pasteczka \cite{MatPas1901}. For details we refer the reader to the recent paper of J.~Jarczyk and W.~Jarczyk \cite{JarJar18}.

In (nearly) all of this paper authors refered to some counterpart of a result by Borwein-Borwein which guarantee that the invariant mean is uniquely determined. Regretfully such consideration cannot be generalized to the integral setting. 
Therefore our paper based on a recent results by Matkowski--Pasteczka \cite{MatPas1901} and Pasteczka \cite{Pas19a} for noncontinuous means.

\subsection{Integral means}

Hereafter $I$ stands for the arbitrary subinterval of $I$, $\mathcal{B}(I)$ and $\mathcal{L}(I)$ denote the Borel and the Lebesgue $\sigma$-algebra on $I$, respectively. Furthermore, let $\mathcal{P}(I)$ be a family of all compactly supported probabilistic on $\mathcal{B}(I)$. An (integral) mean on $I$ is a function $\M \colon \mathcal{P}(I) \to I$ such that
\Eq{*}{
\M(\P) \in [\inf \supp \P,\sup\supp \P]\text{ for all } \P\in \mathcal{P}(I). 
}
Using the notion $\gamma(\P):=[\inf \supp \P,\sup \supp \P]$ we can rewrite it briefly as $\M(\P)\in \gamma(\P)$.

Following the notion of Hardy-Littlewood-Polya \cite{HarLitPol34} for all $f \in \CM(I)$ we can define the \emph{(integral) quasiarithmetic mean} $\QA{f} \colon \mathcal{P}(I) \to I$ by
\Eq{*}{
\QA{f}(\P):=f^{-1} \big( \int f(x)\:d\P(x)\big).
}
We slightly abuse the notion of quasiarithmetic mean as $\QA{f}$ is both discrete and integral quasiarithmetic means. However it do not cause misunderstandings as they are defined of disjoint domains. Moreover for $k \in \N$ and a vector $(x_1,\dots,x_k)\in I^k$ we have
\Eq{*}{
\QA{f}\big(\tfrac{1}{k} (\delta_{x_1}+\cdots+\delta_{x_k})\big)=\QA{f}(x_1,\dots,x_n),
}
where $\delta_x$ stands for the Dirac delta. Thus this definition generalize the discrete one.
Similarly to the discrete setting we define a \emph{$p$-th power mean} by $\PM_p:=\QA{\pi_p}$.

The aim of this paper is to generalize the notion of invariant means to infinite families of integral quasiarithmetic means.

\section{Auxiliary results}

Let us first prove a simple result concerning the properties of a distance between two quasiarithmetic means.
\begin{prop}\label{prop:dfg}
Let $I \subset \R$ be a compact interval and $f,g \in \CM(I)$. Define $d_{f,g} \colon (0,|I|] \to [0,|I|]$ by
\Eq{*}{
d_{f,g}(t):=\sup_{\P \colon |\gamma(\P)| \le t}
 \abs{\QA{f}(\P)-\QA{g}(\P)}.
}
Then $d_{f,g}$ is nondecreasing and continuous.  Moreover $d_{f,g}(t)<t$ for all $t \in (0,|I|]$.
\end{prop}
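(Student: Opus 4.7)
Monotonicity of $d_{f,g}$ is immediate, since every $\P$ admissible in the supremum defining $d_{f,g}(t_1)$ is admissible in the one defining $d_{f,g}(t_2)$ whenever $t_1\le t_2$. For the remaining two assertions the common engine will be a pointwise strict inequality paired with Prokhorov compactness on the compact interval $I$.

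\emph{Strict inequality $d_{f,g}(t)<t$.} I first establish the pointwise fact that for every $\P\in\calP(I)$ with $|\gamma(\P)|>0$ one has $|\QA{f}(\P)-\QA{g}(\P)|<|\gamma(\P)|$. Both means lie in $\gamma(\P)$, so only equality needs ruling out; equality would force, say, $\QA{f}(\P)=\inf\supp\P$, which gives $\int f\,d\P=f(\inf\supp\P)$, and by strict monotonicity of $f$ forces $\P=\delta_{\inf\supp\P}$, contradicting $|\gamma(\P)|>0$. To lift this pointwise bound to the supremum, suppose $d_{f,g}(t)=t$ and pick $\P_n$ with $|\gamma(\P_n)|\le t$ and $|\QA{f}(\P_n)-\QA{g}(\P_n)|\to t$. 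Writing $\gamma(\P_n)=[a_n,b_n]\subset I$ and invoking Prokhorov's theorem (using compactness of $I$), I may pass to a subsequence on which $a_n\to a$, $b_n\to b$ with $b-a\le t$, and $\P_n\to\P$ weakly. Since $f,g$ are bounded and continuous on $I$, $\QA{f}(\P_n)\to\QA{f}(\P)$ and analogously for $g$, so $|\QA{f}(\P)-\QA{g}(\P)|=t$. Weak convergence combined with $\supp\P_n\subset[a_n,b_n]$ forces $\supp\P\subset[a,b]$ (by the portmanteau closed-set bound applied to $[a-\varepsilon,b+\varepsilon]$), hence $|\gamma(\P)|\le t$, contradicting the pointwise strict inequality at $\P$.

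\emph{Continuity.} Since $d_{f,g}$ is nondecreasing, it suffices to handle one-sided limits. Right-continuity at $t$ follows from the same recipe: for $t_n\downarrow t$ take $\P_n$ with $|\gamma(\P_n)|\le t_n$ and $|\QA{f}(\P_n)-\QA{g}(\P_n)|\ge d_{f,g}(t_n)-1/n$, extract a weak limit $\P$ with $|\gamma(\P)|\le t$, and conclude $\lim_n d_{f,g}(t_n)=|\QA{f}(\P)-\QA{g}(\P)|\le d_{f,g}(t)$. For left-continuity at $t$, given $\varepsilon>0$ I pick $\P$ with $|\gamma(\P)|\le t$ and $|\QA{f}(\P)-\QA{g}(\P)|\ge d_{f,g}(t)-\varepsilon$, set $a:=\inf\supp\P$, and consider the pushforward $\P_s$ of $\P$ under the affine contraction $T_s(x)=a+s(x-a)$, $s\in(0,1)$. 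Then $\supp\P_s\subset[a,a+s|\gamma(\P)|]$, so $|\gamma(\P_s)|\le st<t$; the uniform convergence $T_s\to\id$ on $\supp\P$ gives $\P_s\to\P$ weakly as $s\to 1^-$, hence $|\QA{f}(\P_s)-\QA{g}(\P_s)|\to|\QA{f}(\P)-\QA{g}(\P)|$. For $n$ large enough $|\gamma(\P_s)|\le t_n$, so $d_{f,g}(t_n)\ge|\QA{f}(\P_s)-\QA{g}(\P_s)|$; letting first $s\to 1^-$ and then $\varepsilon\to 0$ yields $\lim_n d_{f,g}(t_n)\ge d_{f,g}(t)$.

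\emph{Expected difficulty.} The delicate step is upgrading the pointwise strict inequality $|\QA{f}(\P)-\QA{g}(\P)|<|\gamma(\P)|$ to the uniform bound $d_{f,g}(t)<t$: a priori the mass of a near-extremal $\P_n$ could concentrate toward the endpoints of $\gamma(\P_n)$, so one needs weak compactness together with continuity of $\P\mapsto\QA{f}(\P)$ on a compact base space to guarantee that the limit measure still lies in the admissible class. The left-continuity half is a smaller technical variation of the same circle of ideas, where the affine rescaling $T_s$ is the device that strictly shrinks $\gamma(\P)$ without much disturbing the two quasi-arithmetic means.
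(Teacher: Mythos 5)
Your proof is correct, and it follows a genuinely different route from the paper's. The paper's central move is a reduction to two-point measures: invoking a result of Cargo and Shisha, it identifies $d_{f,g}(t)$ with the supremum of the continuous function $m(x,y,\theta)=\abs{\QA{f}(\theta \delta_x+(1-\theta)\delta_y)-\QA{g}(\theta \delta_x+(1-\theta)\delta_y)}$ over the compact set $S_t=\{(x,y,\theta)\in I^2\times[0,1] \colon \abs{x-y}\le t\}$, after which the strict bound $d_{f,g}(t)<t$ (the supremum is attained, and $m(x,y,\theta)<\abs{x-y}$ off the diagonal) and the continuity (maximizers subconverge inside the compact set $I^2\times[0,1]$) become finite-dimensional Bolzano--Weierstrass arguments. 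You never perform this reduction: you prove the pointwise strict inequality $\abs{\QA{f}(\P)-\QA{g}(\P)}<\abs{\gamma(\P)}$ for every non-Dirac $\P\in\calP(I)$ --- the paper only records this for two-point measures --- and you replace the finite-dimensional compactness by Prokhorov compactness of $\calP(I)$ together with the portmanteau bound, which keeps the limit measure inside the admissible class; the affine contraction $T_s$ then supplies left-continuity. The trade-off: the paper's proof is elementary once the Cargo--Shisha identity is granted, but it hinges on that nontrivial external reduction; yours is self-contained, and it needs only near-maximizers rather than attained suprema (thereby avoiding the somewhat informal bookkeeping with topological limits of the sets $S_{t_n}$ in the paper's continuity step), but it pays with weak-convergence machinery. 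Both arguments are complete; yours would also survive essentially unchanged in settings where a reduction to finitely supported measures is unavailable.
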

\begin{proof}
Denote briefly $d \equiv d_{f,g}$. 
For $t \in (0,|I|]$ define 
\Eq{*}
{S_t:=\{(x,y,\theta) \in I\times I \times [0,1] \colon \abs{x-y}\le t\}.
}
and $m \colon I^2 \times [0,1] \to \R$ by
\Eq{*}{
m(x,y,\theta):=\abs{\QA{f}\big(\theta \delta_x+(1-\theta)\delta_y\big)-\QA{g}\big(\theta \delta_x+(1-\theta)\delta_y\big)}.
}
Then $m$ is continuous and $m(x,y,\theta)<\abs{x-y}$ unless $x=y$.

On the other hand by \cite{CarShi69} we have
\Eq{*}{
d(t)=\sup_{(x,y,\theta) \in S_t} m(x,y,\theta)=\sup_{S_t} m.
}
Since $S_t$ is compact we have $d(t)<t$ for all $ t\in(0,|I|]$.

Moreover for all $t_1 \le t_2$ we have $S_{t_1} \subseteq S_{t_2}$, thus \Eq{*}{
d(t_1)= \sup_{S_{t_1}} m \le \sup_{S_{t_2}} m =d(t_2)
}
which implies that $d$ is nondecreasing.

Now we prove that $d$ is continuous.
Fix $t_0 \in U=:U_0$ and consider a monotone sequence $(t_n)_{n=1}^\infty$, $\lim_{n\to \infty}t_n = t_0$.
Due to the monotonicity of $d$ we obtain that $(d(t_n))_{n=1}^\infty$ is convergent.

As $m$ is continuous for all $n \ge 0$ the set $S_{t_n}$ is compact, and we have 
\Eq{*}{
d(t_n)=m(s_n)\qquad \text{ for some }s_n \in S_{t_n} \subset I^2 \times [0,1],\quad n \in\{0,1,\dots\}.
}

As $I^2\times [0,1]$ is compact, there exists a subsequence $(s_{n_k})_{k=1}^\infty$ convergent to some element $\bar s$. Then $\bar s$ belongs to a topological limit of $S_{t_{n_k}}$, i.e. $\bar s \in S_{t_0}$.

Therefore
\Eq{*}{
\lim_{n \to \infty} d(t_n)=\lim_{k \to \infty} d(t_{n_k})=\lim_{k \to \infty} m(s_{n_k})=m\big(\lim_{k \to \infty} s_{n_k}\big)=m(\bar s)\le d(t_0).
}
To prove the converse inequality take a sequence $(x_n)_{n=1}^\infty$ convergent to $s_0$ such that $x_n \in S_n$ for all $n \in \N$. 
Then
\Eq{*}{
d(t_0)&=m(s_0)
=m(\lim_{n\to \infty} x_n)
=\lim_{n\to \infty}m( x_n) \le\liminf_{n\to \infty}m( s_n)
=\lim_{n\to \infty}d(t_n).
}

Therefore  $d(t_0)=\lim\limits_{n\to\infty} d(t_n)$.
\end{proof}

At the end of this section let us recall a folk result for discrete dynamical systems with a trivial attractor.
\begin{lem}\label{lem:it}
 Let $I$ be an interval with $\inf I=0$ and $d \colon I \to I$ be a continuous function such that 
 $d(x)<x$ for all $x \in I \setminus\{0\}$. Then the sequence of iterates $(d^n(x))$ converges to zero for all $x \in I$.
\end{lem}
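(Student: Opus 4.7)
Fix $x \in I$ and set $a_n := d^n(x)$ for $n \geq 0$. My plan is the standard monotone-convergence argument for contractive dynamics: show that $(a_n)$ is nonincreasing and bounded below by $0$, extract a limit $L$, and then use continuity of $d$ together with the strict inequality $d(y)<y$ to force $L=0$, since any $L>0$ in $I$ would have to be a fixed point of $d$.

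First I would verify monotonicity. If $a_n>0$, the hypothesis gives $a_{n+1}=d(a_n)<a_n$ directly. If $a_n=0$ (possible only when $0\in I$), then continuity of $d$ combined with the squeeze $0\le d(y)<y$ for small $y>0$ forces $d(0)=\lim_{y\to 0^+}d(y)=0$, so $a_{n+1}=0$. Hence $(a_n)$ is nonincreasing, and since each $a_n\in I\subseteq[0,\infty)$ the sequence is bounded below by $\inf I=0$. The monotone convergence theorem yields $L:=\lim_{n\to\infty}a_n\in[0,a_0]$.

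Next I would identify $L$. The case $a_0=0$ is trivial, so assume $a_0>0$; then $a_1=d(a_0)<a_0$, and since $L=\inf_n a_n$ we obtain $L\le a_1<a_0\le \sup I$, so in particular $L<\sup I$. If $L>0$, then $L\in(\inf I,\sup I)$, which lies in the interior of the interval $I$, hence $L\in I$, and continuity of $d$ gives $d(L)=\lim d(a_n)=\lim a_{n+1}=L$, contradicting $d(L)<L$. Therefore $L=0$, as required.

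The main obstacle, such as it is, is the endpoint bookkeeping: the hypothesis $\inf I=0$ does not force $0\in I$, so one cannot blindly evaluate $d$ at the candidate limit. The argument above handles this by establishing $L<\sup I$ before invoking continuity, which places $L$ in the interior of $I$ precisely in the only case where continuity of $d$ at $L$ is actually used.
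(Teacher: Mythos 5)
Your proof is correct. Note that the paper itself offers no proof of this lemma at all: it is stated as a ``folk result'' and simply recalled, so there is nothing in the source to compare against step by step. Your argument is the standard one that the authors presumably have in mind --- monotone convergence of the orbit plus a fixed-point contradiction via continuity --- and you carry it out completely. The two endpoint issues you flag are exactly the points where a careless write-up would have gaps, and you resolve both correctly: if $0\in I$ the squeeze $0\le d(y)<y$ forces $d(0)=0$, so the orbit is genuinely nonincreasing even if it hits $0$; and the inequality $L\le a_1<a_0\le\sup I$ places any putative positive limit $L$ strictly between $\inf I$ and $\sup I$, hence inside the interval $I$, which is precisely what licenses evaluating $d$ at $L$ and deriving the contradiction $d(L)=L$. (The only case not explicitly mentioned is the degenerate interval $I=\{0\}$, where the claim is trivial since $d(0)\in I$ forces $d(0)=0$.) In short: the proposal supplies, correctly, the proof that the paper omits.
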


\section{Invariance of quasiarithmetic means}
In this section we study the invariance of infinite family of quasiarithmetic means. First, we need to define a selfmapping which is a counterpart of \eq{E:QAit}. Contrary to the discrete case where such mapping is well-defined for every tuple we need some additional restrictions. 

Family $\cF:=(f_x)_{x \in[0,1]}$ of functions $f_x \colon I \to \R$ is called \emph{admissible} if 
\begin{enumerate}
 \item each $f_x$ is continuous and strictly monotone,
\item a bivariate function $I \times [0,1] \ni (t,x) \mapsto f_x(t)$ is measurable with respect to the product $\sigma$-algebra $\mathcal{B}(I)\times\mathcal{L}[0,1]$. 
 \end{enumerate}

For an admissible family $\cF:=(f_x)_{x \in[0,1]}$ and $\P\in \calP(I)$ define a measure $\TcF(\P)$ on $\R$ by
\Eq{*}{
\TcF(\P) \colon S \mapsto \abs{\{x \in [0,1] \colon \QA{f_x}(\P) \in S \} }
}
Now we are in the position to proof one of the most important results in this note.
 
\begin{lem}\label{lem:def}
Let $\cF:=(f_x)_{x \in[0,1]}$ be an admissible family and $\P \in\mathcal{P}(I)$. Then each $\QA{f_x}(\P)$ is well-defined and, moreover, $\TcF(\P) \in\mathcal{P}(I)$.
\end{lem}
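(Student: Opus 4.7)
The plan is to split the lemma into two checks: well-definedness of each $\QA{f_x}(\P)$, and then membership of $\TcF(\P)$ in $\calP(I)$. For the first part, fix $x\in[0,1]$ and let $K=\supp\P$, which is compact by assumption. Since $f_x$ is continuous, $f_x(K)$ is a compact interval, hence $u_x:=\int f_x(t)\,d\P(t)$ is finite and lies between $\min f_x(K)$ and $\max f_x(K)$; by strict monotonicity, $f_x^{-1}(u_x)$ exists in $K\subseteq\gamma(\P)\subseteq I$, showing $\QA{f_x}(\P)$ is well-defined and in $I$.

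For the second part, the core task is to show the map $\phi\colon[0,1]\to I$, $\phi(x):=\QA{f_x}(\P)$, is Lebesgue measurable, since then $\TcF(\P)$ is the push-forward of Lebesgue measure on $[0,1]$ under $\phi$, which is automatically a Borel probability measure of total mass $1$; moreover its support lies in $\gamma(\P)$, a compact subset of $I$, giving $\TcF(\P)\in\calP(I)$. I would establish measurability of $\phi$ in two steps.

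Step A: The map $x\mapsto u_x$ is measurable. Writing $f_x=f_x^+-f_x^-$, both summands are non-negative and $\mathcal{B}(I)\times\mathcal{L}[0,1]$-measurable by admissibility. Tonelli's theorem applied to the integration against $\P$ in the $t$-variable gives that $x\mapsto\int f_x^{\pm}\,d\P$ are measurable, and each is finite (by continuity of $f_x$ on the compact set $K$). Their difference is $u_x$.

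Step B: Measurability of $\phi$. If $I$ is a singleton the claim is trivial, so I fix two points $c<d$ in $I$. The slice map $x\mapsto(c,x)$ is measurable from $([0,1],\mathcal{L}[0,1])$ into $(I\times[0,1],\mathcal{B}(I)\times\mathcal{L}[0,1])$ (since preimages of measurable rectangles are either $\emptyset$ or the second factor), and composing with admissibility gives measurability of $x\mapsto f_x(c)$, and similarly for $f_x(d)$ and $f_x(a)$ for any fixed $a\in I$. Hence the sets $I^+:=\{x:f_x(c)<f_x(d)\}$ and $I^-:=\{x:f_x(c)>f_x(d)\}$ partition $[0,1]$ into measurable pieces corresponding to $f_x$ being increasing or decreasing. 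For any $a\in I$, on $I^+$ we have $\phi(x)\le a\iff u_x\le f_x(a)$, while on $I^-$ we have $\phi(x)\le a\iff u_x\ge f_x(a)$. Both descriptions involve only measurable functions of $x$, so $\{\phi\le a\}$ is measurable for every $a\in I$, proving measurability of $\phi$.

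The main obstacle I expect is the bookkeeping between Borel and Lebesgue $\sigma$-algebras and making sure the Tonelli-type argument applies despite $f_x$ being only continuous rather than integrable in a joint sense; the decomposition into positive and negative parts combined with compactness of $\supp\P$ handles this cleanly. The partition into $I^+$ and $I^-$ is essential because the inversion step does not behave uniformly in $x$ without first separating monotonicity directions.
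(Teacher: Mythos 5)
Your proof is correct and, at the decisive step, genuinely differs from the paper's. Both arguments share the same skeleton: well-definedness of $\QA{f_x}(\P)$ from compactness of $\supp\P$ plus strict monotonicity, Lebesgue measurability of $x\mapsto u_x=\int f_x\,d\P$ via Fubini--Tonelli, and the identification of $\TcF(\P)$ as the push-forward of Lebesgue measure on $[0,1]$. They diverge at the passage from measurability of $x\mapsto u_x$ to measurability of the sets $\{x\colon \QA{f_x}(\P)\in S\}$. The paper fixes a Borel set $S$, notes that each $f_x$ is a homeomorphism so that $f_x(S)$ is Borel, and writes the set as $h^{-1}(f_x(S))$, where $h(x)=u_x$ in your notation; this expression is not literally meaningful, since $f_x(S)$ varies with $x$ --- the set is really $\{x\colon h(x)\in f_x(S)\}$, not the preimage of a fixed set --- so the published argument is loose exactly there. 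Your Step B avoids the issue: the measurable partition of $[0,1]$ into $I^+$ and $I^-$ (detected by the measurable sections $x\mapsto f_x(c)$, $x\mapsto f_x(d)$) reduces the condition $\QA{f_x}(\P)\le a$ to an inequality between the two measurable functions $x\mapsto u_x$ and $x\mapsto f_x(a)$, which handles the $x$-dependence rigorously and yields Lebesgue-to-Borel measurability of the value map $\phi$; measurability of $\phi^{-1}(S)$ for every Borel $S$ then follows at once. In short, your route buys rigor precisely where the paper's proof is weakest, at the cost of a little extra bookkeeping with $I^{\pm}$. One small slip worth fixing: $f_x^{-1}(u_x)$ need not lie in $K=\supp\P$ (which may be disconnected); it lies in the interval $\gamma(\P)=[\inf K,\sup K]$, which is all your argument actually uses.
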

\begin{proof}
Let  $h(x):=\int_If_x\:d\P$ for $x\in [0,1]$. Since $\P$ is probabilistic measure with support contained in $I$ and  $f_x$ is continuous, strictly monotone function, we have   $ h(x) \in f_x(I)$ and thus $\QA{f_x}(\P)=f_x^{-1}(h(x))$ is well defined.

Moreover by the measurability of the map $I \times [0,1] \ni (t,x) \mapsto f_x(t)$ and Fubini-Tonnelli theorem, we get that $h$ is Lebesgue measurable. Let $S\in \mathcal{B} $. Because $f_x$ is continuous, injective mapping defined on an interval, it follows that $f_x$ is homeomorphism and hence $f_x(S)$ is also a Borel measurable set. Consequently, the set 
$$
\{x \in [0,1] \colon \QA{f_x}(\P) \in S \}=h^{-1}(f_x(S))
$$
is measurable in the sense of Lebesgue. Therefore,  $\TcF(\P)$ is well-defined Borel measure on $I.$ Obviously, $\TcF(\P)(I)=1,$ which concludes the proof. 
\end{proof}

Applying the above lemma we can introduce the notion of invariance in the spirit of Matkowski. Namely for an admissible family $\cF:=(f_x \colon I \to \R)_{x \in[0,1]}$ a mean $M \colon \calP(I) \to I$ is called \emph{$\TcF$-invariant} provided
$M=M\circ \TcF$. We are going to study properties of \mbox{$\TcF$-invariant} means. Adapting some general results from \cite{Pas19a}  define the lower- and the upper-invariant mean $\Lo_\cF, \Up_\cF \colon \calP(I)\to I$ by
\Eq{*}{
\Lo_\cF(\P)&:= \lim_{n \to \infty} \Big( \inf \gamma(\TcF^n(\P))\Big), \\
\Up_\cF(\P)&:= \lim_{n \to \infty} \Big(\sup \gamma( \TcF^n(\P))\Big).
}

Now we show that these means are $\TcF$-invariant. Moreover, similarly to the discrete case, $\Lo_\cF$ and $\Up_\cF$ are the smallest and the biggest \mbox{$\TcF$-invariant} means, respectively.
\begin{thm}\label{thm:LMU}
Let $\cF:=(f_x \colon I \to \R)_{x \in[0,1]}$ be an admissible family. Then both $\Lo_\cF$ and $\Up_\cF$ are $\TcF$-invariant means. Moreover 
for every $\TcF$-invariant mean $M\colon \mathcal{P}(I) \to I$ the inequality $\Lo_\cF\le M\le \Up_\cF$ holds. 
\end{thm}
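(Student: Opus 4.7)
The plan is to establish two structural facts about the iteration of $\TcF$ and then extract everything from them cleanly.

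First I would observe the key monotonicity: for every $\P\in\calP(I)$ and every $x\in[0,1]$, the quasiarithmetic mean $\QA{f_x}(\P)$ lies in $\gamma(\P)$, so the push-forward-type measure $\TcF(\P)$ is concentrated on $\gamma(\P)$. In particular $\gamma(\TcF(\P))\subseteq\gamma(\P)$, and by iterating one obtains a nested sequence of compact intervals
\[
\gamma(\TcF^{n+1}(\P))\subseteq\gamma(\TcF^n(\P))\subseteq\gamma(\P),\qquad n\in\N.
\]
Consequently $\bigl(\inf\gamma(\TcF^n(\P))\bigr)_{n}$ is nondecreasing, $\bigl(\sup\gamma(\TcF^n(\P))\bigr)_{n}$ is nonincreasing, and both are bounded, so the limits defining $\Lo_\cF(\P)$ and $\Up_\cF(\P)$ exist. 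Moreover both limits lie in $\gamma(\P)$, which shows that $\Lo_\cF$ and $\Up_\cF$ are indeed (integral) means on $I$.

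Next I would verify $\TcF$-invariance by a trivial index-shift. For any $\P\in\calP(I)$,
\[
\Lo_\cF(\TcF(\P))=\lim_{n\to\infty}\inf\gamma(\TcF^n(\TcF(\P)))=\lim_{n\to\infty}\inf\gamma(\TcF^{n+1}(\P))=\Lo_\cF(\P),
\]
and the analogous computation for $\Up_\cF$ settles the first assertion.

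Finally, for the extremality part, let $M\colon\calP(I)\to I$ be an arbitrary $\TcF$-invariant mean. Applying the mean property to $\TcF^n(\P)$ and using the invariance $M=M\circ\TcF^n$ yields, for every $n\in\N$,
\[
\inf\gamma(\TcF^n(\P))\le M(\TcF^n(\P))=M(\P)\le\sup\gamma(\TcF^n(\P)).
\]
Passing to the limit in $n$ gives $\Lo_\cF(\P)\le M(\P)\le\Up_\cF(\P)$, which is the desired sandwich.

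There is no real obstacle here beyond carefully justifying the initial inclusion $\supp\TcF(\P)\subseteq\gamma(\P)$; once that is in hand the rest is formal, and the argument parallels the discrete template from \cite{Pas19a} without requiring any continuity or contraction information (which belongs to the uniqueness question handled separately via Proposition~\ref{prop:dfg} and Lemma~\ref{lem:it}).
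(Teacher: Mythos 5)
Your proposal is correct and follows essentially the same route as the paper: the nested-interval observation $\gamma(\TcF^{n+1}(\P))\subseteq\gamma(\TcF^n(\P))$ coming from the mean property of each $\QA{f_x}$, the index-shift argument for invariance, and the sandwich $M(\P)\in\bigcap_{n}\gamma(\TcF^n(\P))=[\Lo_\cF(\P),\Up_\cF(\P)]$ for extremality. The only cosmetic difference is that the paper explicitly invokes Lemma~\ref{lem:def} to guarantee $\TcF^n(\P)\in\calP(I)$ before iterating, which is exactly the ``initial inclusion'' obstacle you flag at the end, so no gap remains.
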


\begin{proof}
Take $\P \in \calP(I)$ arbitrarily. By virtue of Lemma~\ref{lem:def} we obtain that $\TcF^{n}(\P) \in \calP(I)$ for all $n \in \N$.

Moreover as $\QA{f_x}(\TcF^{n}(\P)) \in \gamma(\TcF^{n}(\P))$ for all $x \in [0,1]$ we obtain $\gamma(\TcF^{n+1}(\P)) \subseteq \gamma(\TcF^{n}(\P))$. In particular for every $\P \in \calP(I)$ we have
\Eq{*}{
\Lo_\cF(\P)&= \lim_{n \to \infty} \Big( \inf  \gamma(\TcF^n(\P))\Big)\subseteq \gamma(\TcF^0(\P))=\gamma(\P),
}
which proves that $\Lo_\cF$ is a mean. Moreover 
\Eq{*}{
\Lo_\cF(\P)&= \lim_{n \to \infty} \Big( \inf  \gamma(\TcF^n(\P))\Big)=\lim_{n \to \infty} \Big( \inf  \gamma(\TcF^{n+1}(\P))\Big)\\
&=\lim_{n \to \infty} \Big( \inf  \gamma\big(\TcF^{n}\big(\TcF(\P)\big)\big)\Big)=\Lo_\cF\circ\TcF(\P),
}
which shows that $\Lo_\cF$ is $\TcF$-invariant. Similarly $\Up_\cF$ is an $\TcF$-invariant mean.

Now let $M \colon \mathcal{P}(I) \to I$ be an arbitrary $\TcF$-invariant mean. Then, applying the definition of $\TcF$-invariance iteratively, we obtain
\Eq{*}{
M(\P)=M \circ \TcF^n(\P) \text{ for all }\P \in \mathcal{P}(I)\text{ and }n \in \N.
}

By mean property it follows that for all $\P \in \mathcal{P}(I)$ we have 
\Eq{*}{
M(\P)\in \gamma(\TcF^n(\P)) \qquad (n \in \N)
}
and therefore, as $\gamma(\TcF^n(\P))\subseteq \gamma(\TcF^{n-1}(\P))$, we obtain 
\Eq{*}{
 M(\P)\in \bigcap_{n=1}^\infty \gamma(\TcF^n(\P))=\big[\Lo_\cF(\P),\Up_\cF(\P)\big].
}
The latter inequality can be rewritten as $\Lo_\cF \le M \le \Up_\cF$. 
\end{proof}

\subsection{Conjugacy of means}
Following the idea contained in Bullen \cite{Bul03} and Chudziak-P\'ales-Pasteczka \cite{ChuPalPas19}, let us introduce the notion of conjugancy of means. For a continuous and strictly monotone function $u \colon J \to I$ and a mean $M\colon \mathcal{P}(I) \to I$ define a the conjugancy $M^{[u]}\colon \mathcal{P}(J) \to J$ by
\Eq{*}{
M^{[u]}(\P)=u^{-1} \Big( M\big( u(x)\:d\P(x)\big)\Big)\ .
}

It is easy to see that $(M^{[u]})^{[u^{-1}]}=M$. Moreover for every $f \in \CM(I)$  the quasiarithmetic mean $\QA{f}$ is a $f$-conjugant of the arithmetic mean (which coincides with the expected value). 

The following lemma is easy to see
\begin{lem}\label{lem:conj}
 Let $\cF:=(f_x \colon I \to \R)_{x \in[0,1]}$ be an admissible family, $u \colon J \to I$,  and $\cG:=(g_x = f_x \circ u)_{x \in[0,1]}$. Then $M\colon \mathcal{P}(I) \to I$ is a $\TcF$-invariant mean  if and only if  $M^{[u]}$ is a $\TcG$-invariant mean.
\end{lem}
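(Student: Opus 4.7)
The plan is to reduce the claim to a single measure-theoretic identity relating $\TcF$ and $\TcG$ through the pushforward by $u$, and then to observe that taking pushforwards is a bijection $\mathcal{P}(J)\to\mathcal{P}(I)$ so that the two invariance conditions translate into one another.

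First I would unwind the definition of conjugacy. Interpreting the notation $u(x)\,d\P(x)$ as the pushforward measure $u_*\P$ (so that $\int\varphi\,d(u_*\P) = \int \varphi\circ u\,d\P$), we have $M^{[u]}(\P)=u^{-1}\bigl(M(u_*\P)\bigr)$ for every $\P\in\mathcal{P}(J)$. Since $u$ is a homeomorphism of $J$ onto $I$, the map $\P\mapsto u_*\P$ is a bijection $\mathcal{P}(J)\to\mathcal{P}(I)$.

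Next I would record the pointwise identity between the members of $\cF$ and $\cG$. For each $x\in[0,1]$ and each $\P\in\mathcal{P}(J)$, the change-of-variables formula together with $g_x = f_x\circ u$ gives
\Eq{*}{
\QA{g_x}(\P) = g_x^{-1}\!\left(\int g_x\,d\P\right) = u^{-1}\!\left(f_x^{-1}\!\left(\int f_x\,d(u_*\P)\right)\right) = u^{-1}\!\bigl(\QA{f_x}(u_*\P)\bigr).
}
From this I deduce the central identity
\Eq{E:pushid}{
u_*\bigl(\TcG(\P)\bigr) = \TcF(u_*\P) \quad\text{for every }\P\in\mathcal{P}(J),
}
by checking that both measures assign the same Lebesgue measure to every Borel set: for $S\in\mathcal{B}(I)$, $u_*(\TcG(\P))(S) = \TcG(\P)(u^{-1}(S)) = |\{x: u^{-1}(\QA{f_x}(u_*\P))\in u^{-1}(S)\}| = \TcF(u_*\P)(S)$, using the injectivity of $u$.

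Finally, the conclusion follows by a direct computation. Using \eq{E:pushid},
\Eq{*}{
M^{[u]}\bigl(\TcG(\P)\bigr) = u^{-1}\!\bigl(M(u_*\TcG(\P))\bigr) = u^{-1}\!\bigl(M(\TcF(u_*\P))\bigr),
}
whereas $M^{[u]}(\P) = u^{-1}(M(u_*\P))$. Thus $M^{[u]}=M^{[u]}\circ\TcG$ on $\mathcal{P}(J)$ is equivalent (after applying the bijection $u$) to $M(u_*\P)=M(\TcF(u_*\P))$ for every $\P\in\mathcal{P}(J)$, which by surjectivity of $\P\mapsto u_*\P$ amounts to $M=M\circ\TcF$ on $\mathcal{P}(I)$. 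The only place where anything could go wrong is in verifying \eq{E:pushid} carefully—in particular justifying that the relation between $\QA{g_x}(\P)$ and $\QA{f_x}(u_*\P)$ transfers correctly to the Lebesgue measure on $[0,1]$—but this is a routine application of the change-of-variables formula and the injectivity of $u$.
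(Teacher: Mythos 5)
Your proof is correct, but there is nothing in the paper to compare it against: the authors state this lemma with only the remark ``The following lemma is easy to see'' and give no proof whatsoever. Your write-up supplies exactly the argument that remark leaves implicit, and it is the natural one. The heart of the matter is the intertwining identity $u_*\circ \TcG = \TcF\circ u_*$, which you derive correctly from the pointwise relation $\QA{g_x}(\P)=u^{-1}\bigl(\QA{f_x}(u_*\P)\bigr)$ (itself a consequence of $g_x^{-1}=u^{-1}\circ f_x^{-1}$ and the change-of-variables formula), and the passage from that identity to the equivalence of the two invariance equations uses only that $\P\mapsto u_*\P$ is a bijection $\calP(J)\to\calP(I)$. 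Two small points are left tacit and would belong in a complete write-up: (i) $\cG$ must itself be admissible for $\TcG$ to be well defined via Lemma~\ref{lem:def} --- each $g_x=f_x\circ u$ is continuous and strictly monotone, and $(t,x)\mapsto f_x(u(t))$ is measurable with respect to $\mathcal{B}(J)\times\mathcal{L}[0,1]$ because $(t,x)\mapsto (u(t),x)$ is measurable and rectangles generate the product $\sigma$-algebra; (ii) $M^{[u]}$ is genuinely a \emph{mean} on $\calP(J)$, not merely a function satisfying the functional equation, which follows from $\supp(u_*\P)=u(\supp\P)$ and hence $\gamma(u_*\P)=u(\gamma(\P))$ --- although the paper's own definition of conjugacy already takes this for granted.
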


\subsection{Uniqueness of invariant means}
In what follows we show few sufficient condition in order to guarantee the uniqueness of $\TcF$-invariant mean. First observe that Theorem~\ref{thm:LMU} has the following corollary
\begin{cor}\label{cor:LMU}
Let $\cF:=(f_x \colon I \to \R)_{x \in[0,1]}$ be an admissible family. Then $\Lo_\cF=\Up_\cF$ if and only is there exists exactly one $\TcF$-invariant mean.
\end{cor}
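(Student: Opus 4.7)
The plan is to observe that Corollary~\ref{cor:LMU} follows immediately from Theorem~\ref{thm:LMU} by an elementary sandwich argument, so no additional machinery is required; the entire content lives in the two conclusions of the theorem: (a) that $\Lo_\cF$ and $\Up_\cF$ are themselves $\TcF$-invariant means, and (b) that every $\TcF$-invariant mean $M$ satisfies $\Lo_\cF\le M\le \Up_\cF$.

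For the forward implication, I would assume $\Lo_\cF=\Up_\cF$. Then for any $\TcF$-invariant mean $M\colon \mathcal{P}(I)\to I$, part (b) of Theorem~\ref{thm:LMU} gives $\Lo_\cF(\P)\le M(\P)\le \Up_\cF(\P)=\Lo_\cF(\P)$ for every $\P\in\mathcal{P}(I)$, forcing $M=\Lo_\cF$. Since part (a) guarantees that $\Lo_\cF$ itself is $\TcF$-invariant, this shows it is the unique such mean.

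For the reverse implication, I would invoke part (a) of Theorem~\ref{thm:LMU}, which provides two $\TcF$-invariant means $\Lo_\cF$ and $\Up_\cF$. Uniqueness of the $\TcF$-invariant mean then forces $\Lo_\cF=\Up_\cF$.

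There is no genuine obstacle; the only thing worth being careful about is making sure the equivalence is stated as an equality of functions on all of $\mathcal{P}(I)$ (so both directions quantify over $\P$ uniformly), which is already the form in which Theorem~\ref{thm:LMU} is stated. The proof will therefore be only a few lines long, essentially a restatement of the extremal character of $\Lo_\cF$ and $\Up_\cF$.
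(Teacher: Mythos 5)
Your proposal is correct and matches the paper exactly: the paper offers no written proof, presenting the statement as an immediate consequence of Theorem~\ref{thm:LMU}, and the intended argument is precisely your sandwich reasoning (both directions following from the facts that $\Lo_\cF$ and $\Up_\cF$ are themselves $\TcF$-invariant means and bound every other $\TcF$-invariant mean). Nothing further is needed.
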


The main disadvantage of this result is that it is very difficult to verify this condition in practice. In the next result we show that whenever $\cF$ is bounded from one side then the invariant mean is uniquely determined in a weak sense.

\begin{thm}
 Let $\cF:=(f_x \colon I \to \R)_{x \in[0,1]}$ be an admissible, upper (lower) bounded family. Then 
 there exists a (uniquely determined) $\TcF$-invariant mean $K_\cF \colon \mathcal{P}(I) \to I$ such that 
 \Eq{E:m}{
\lim_{n \to \infty}\QA{k}\circ \TcF^n(\P)=K_\cF (\P)\qquad \text{ for all }k \in \CM(I)\text{ and }\P\in \calP(I). 
}
\end{thm}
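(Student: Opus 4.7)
The plan is to reduce the problem (via conjugacy) to the case where the upper bound is the arithmetic mean, and then to prove weak convergence of the iterates $\TcF^n(\P)$ to a Dirac delta. First, I assume $\cF$ is upper bounded (the lower-bounded case being symmetric): there exists $F \in \CM(I)$ with $\QA{f_x}(\P) \le \QA{F}(\P)$ for all $x \in [0,1]$ and $\P \in \calP(I)$. Applying Lemma~\ref{lem:conj} with $u := F^{-1}$ reduces the problem to the case where the upper bound is the arithmetic mean $\PM_1(\P) = \int t\, d\P(t)$, so that $\QA{f_x}(\P) \le \PM_1(\P)$ always holds.

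Writing $\P_n := \TcF^n(\P)$, $a_n := \inf \gamma(\P_n)$, $b_n := \sup \gamma(\P_n)$, and $e_n := \PM_1(\P_n)$, the definition of $\TcF$ as the push-forward of Lebesgue measure on $[0,1]$ under $x \mapsto \QA{f_x}(\P_n)$ gives
\[
e_{n+1} = \int_0^1 \QA{f_x}(\P_n)\, dx \le e_n \qquad\text{and}\qquad b_{n+1} \le e_n.
\]
Hence $(e_n)$ is decreasing and bounded below by $a_0$, so it converges to some $L \in \gamma(\P)$, and the sandwich $e_n \le b_n \le e_{n-1}$ forces $b_n \downarrow L$. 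The core step is to prove the weak convergence $\P_n \Rightarrow \delta_L$. All $\P_n$ (for $n \ge 1$) live on the compact interval $[a_0, b_1]$; fixing $\epsilon > 0$, the limit $b_n \to L$ kills the upper tail $\P_n\bigl((L+\epsilon, \infty)\bigr)$ for $n$ large, and the estimate
\[
b_n - e_n = \int (b_n - t)\, d\P_n(t) \;\ge\; (b_n - L + \epsilon)\, \P_n\bigl([a_n, L - \epsilon]\bigr),
\]
combined with $b_n - e_n \to 0$ and $b_n - L + \epsilon \to \epsilon > 0$, forces $\P_n\bigl([a_n, L - \epsilon]\bigr) \to 0$.

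Given weak convergence, for any $k \in \CM(I)$ the restriction of $k$ to $[a_0, b_1]$ is continuous and bounded, so $\int k\, d\P_n \to k(L)$, and $\QA{k}(\P_n) \to L$ follows from continuity of $k^{-1}$. Defining $K_\cF(\P) := L$ produces a mean (since $L \in \gamma(\P)$) which is $\TcF$-invariant because $K_\cF(\TcF(\P)) = \lim_n e_{n+1} = L = K_\cF(\P)$; the limit condition uniquely characterises it. Reversing the initial conjugation finishes the argument. The main obstacle is the weak-convergence step---a Markov-type tail estimate exploiting that $b_n$ and $e_n$ share the common limit $L$---with everything else reducing to monotone-sequence bookkeeping after the conjugation reduction.
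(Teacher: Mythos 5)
Your proof is correct and follows essentially the same route as the paper: conjugate by the bounding generator so that the arithmetic mean dominates the family, observe that the arithmetic means of the iterates decrease and squeeze the suprema of the supports to a common limit $L$, prove concentration of the iterates at $\delta_L$, deduce $\QA{k}(\P_n)\to L$ for every $k$, and transfer back through the conjugation. The only difference is in the concentration step and is cosmetic: where you kill the lower tail with a first-moment (Markov-type) estimate against $b_n-e_n$, the paper introduces $\PM_2$, notes that $\Var(\P_n)\to 0$, and applies Chebyshev's inequality --- both arguments yield the same weak convergence to $\delta_L$.
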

\begin{proof}
Assume that $\QA{f_x} \le \QA{u}$ for some $u \colon I \to \R$ and define 
\Eq{*}{
\cG:=(g_x:=f_x \circ u^{-1})_{x\in[0,1]}.
}
As $\QA{f_x} \le \QA{u}$  we get $\QA{g_x}\le \A$ for all $x \in [0,1]$.

Take $\P_0 \in \calP(u(I))$ arbitrarily and let $\P_n:=A_\cG(\P_{n-1})$ for all $n \in \N_+$. Then we know that 
\Eq{*}{
\PM_1(\P_{n+1})\le\PM_2(\P_{n+1})\le\sup \gamma(\P_{n+1}) \le A(\P_n)=\PM_1(\P_n).
}
This implies that all intervals $[\PM_1(\P_{n}),\PM_2(\P_{n})]$ are disjoint.
In particular
\Eq{ElimP2P1}{
\lim_{n \to \infty} \PM_2(\P_{n})=\lim_{n \to \infty}\PM_1(\P_n)=:m(\P).
}
Thus
we obtain
\Eq{ElimVar}{
\lim_{n \to \infty} \Var(\P_n)=\lim_{n \to \infty} (\PM_2(\P_{n}))^2-(\PM_1(\P_n))^2=0.
}

In view of Chebyshev's inequality we have 
\Eq{*}{
\Pr\big(\abs{\P_n-m(\P)}\ge\xi\big)&\le\Pr\big(\abs{\P_n-\PM_1(\P_n)}\ge\xi-\abs{\PM_1(\P_n)-m(\P)}\big)\\
&\le \frac{\Var(\P_n)}{(\xi-\abs{\PM_1(\P_n)-m(\P)})^2}\text{ for all }\xi>0.
}
Whence in view of \eq{ElimP2P1} and \eq{ElimVar} we obtain
\Eq{*}{
\lim_{n \to \infty} \Pr\big(\abs{\P_n-m(\P)}\ge\xi\big) =0 \text{ for all }\xi>0
}
which shows that $\P_n \to \delta_{m(\P)}$ in a (Lebesgue) measure. As each $\P_n$ is compactly supported we obtain that 
\Eq{*}{
\lim_{n \to \infty}\QA{k}(\P_n)=m(\P)\qquad \text{ for all }k \in \CM(u(I)).
}
Consequently, as $\QA{f_x}=\QA{g_x\circ u}=(\QA{g_x})^{[u]}$ for all $x \in [0,1]$ we have
\Eq{*}{
\lim_{n \to \infty}\QA{k}(\TcF^n(\P))=m^{[u]}(\P)\qquad \text{ for all }k \in \CM(I)\text{ and }\P \in \calP(I)
}
which yields \eq{E:m} with $K_\cF:=m^{[u]}$.
\end{proof}

Now we show a result in a case when the family $\cF$ satisfy some sort of boundedness. It is important to emphasize that even a finite family of quasi-arithmetic means can be unbounded (in the family of quasiarithmetic means with a pointwise ordering), see \cite{Pas19b} for details.

\begin{thm} 
 Let $\cF:=(f_x \colon I \to \R)_{x \in[0,1]}$ be an admissible family and $\cFlim$ be a finite subset of $\CM(I)$. Assume that for every $x \in [0,1]$ there exists $l_x,u_x \in \cFlim$ such that $\QA{l_x}\le\QA{f_x}\le \QA{u_x}$. Then there exists a uniquely determined $\TcF$-invariant mean $K_\cF \colon \mathcal{P}(I) \to I$.
\end{thm}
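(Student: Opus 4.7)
The plan is to show that the two extremal invariant means of Theorem~\ref{thm:LMU} agree and then invoke Corollary~\ref{cor:LMU}. Fix $\P\in\calP(I)$ arbitrarily. The inclusion $\gamma(\TcF^{n+1}(\P))\subseteq\gamma(\TcF^{n}(\P))$ established inside Theorem~\ref{thm:LMU} confines the whole orbit to the compact subinterval $J:=\gamma(\P)\subseteq I$. Hence we may restrict every $f_x$ and every $h\in\cFlim$ to $J$ and apply Proposition~\ref{prop:dfg} on this compact interval.

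The crux is a one-step contraction estimate. For any $\Q\in\calP(J)$ and any $x\in[0,1]$, the hypothesis $\QA{l_x}\le \QA{f_x}\le \QA{u_x}$ with $l_x,u_x\in\cFlim$ gives
\[
\QA{f_x}(\Q)\in\bigl[\min_{h\in\cFlim}\QA{h}(\Q),\ \max_{h\in\cFlim}\QA{h}(\Q)\bigr].
\]
Since this holds uniformly in $x$, the support of $\TcF(\Q)$ lies inside the same bracket, so
\[
\abs{\gamma(\TcF(\Q))}\le \max_{h,h'\in\cFlim}\abs{\QA{h}(\Q)-\QA{h'}(\Q)}\le d\bigl(\abs{\gamma(\Q)}\bigr),
\]
where $d(t):=\max_{h,h'\in\cFlim}d_{h,h'}(t)$. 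Because $\cFlim$ is finite and each $d_{h,h'}$ is continuous, nondecreasing, and strictly less than $t$ by Proposition~\ref{prop:dfg}, the function $d$ inherits all three properties.

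Iterating the inequality and using monotonicity of $d$ yields $\abs{\gamma(\TcF^{n}(\P))}\le d^{n}(\abs{J})$ for every $n\in\N$, and Lemma~\ref{lem:it} forces the right-hand side to tend to $0$. Consequently
\[
0\le \Up_\cF(\P)-\Lo_\cF(\P)=\lim_{n\to\infty}\abs{\gamma(\TcF^{n}(\P))}=0,
\]
so $\Lo_\cF(\P)=\Up_\cF(\P)$. As $\P$ was arbitrary, Corollary~\ref{cor:LMU} yields the unique $\TcF$-invariant mean $K_\cF$.

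The main obstacle, and the sole place where the finiteness of $\cFlim$ is genuinely exploited, is the passage from the pointwise bracket to the uniform contraction: only because $d$ is a \emph{finite} maximum of the strict contractions $d_{h,h'}$ can we retain the key inequality $d(t)<t$ needed to feed into Lemma~\ref{lem:it}. Everything else—the measurability of $\TcF(\P)$, the monotonicity of the supports, and the existence of the bracketing pair $l_x,u_x$—has already been arranged by Lemma~\ref{lem:def}, Theorem~\ref{thm:LMU}, and the standing hypotheses.
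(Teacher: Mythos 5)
Your proof is correct and follows essentially the same route as the paper's: the same finite-maximum contraction function $d(t)=\max_{h,h'\in\cFlim}d_{h,h'}(t)$, the same appeal to Proposition~\ref{prop:dfg} and Lemma~\ref{lem:it} to get $d^n\to 0$, and the same conclusion $\Up_\cF=\Lo_\cF$ via Corollary~\ref{cor:LMU}. Your one refinement---restricting to the compact interval $J=\gamma(\P)$ before invoking Proposition~\ref{prop:dfg}---is a careful touch the paper skips (the proposition is stated for compact $I$, while the theorem permits an arbitrary interval), and it makes the argument airtight when $I$ is unbounded.
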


\begin{proof}
Define $d\colon(0,|I|] \to [0,|I|)$ by
\Eq{*}{
d(t):=\max_{l,u \in \cFlim} \sup_{\P \colon |\gamma(\P)| \le t} \abs{\QA{l}(\P)-\QA{u}(\P)} =\max_{l,u \in \cFlim} d_{l,u}(x). 
}
Then by Proposition~\ref{prop:dfg} we obtain that $d$ is continuous and $d(x)<x$ for all $x\in(0,|I|]$. Therefore by Lemma~\ref{lem:it} we obtain that the sequence of iterations $(d^n)_{n=1}^\infty$ tends to zero pointwise.

On the other hand for every mean $\P \in \mathcal{P}(I)$ and  $x \in [0,1]$ we obtain
\Eq{*}{
\min_{l \in \cFlim} \QA{l}(\P) \le \QA{f_x}(\P) \le \max_{u \in \cFlim} \QA{u}(\P)
}
Therefore
\Eq{*}{
\sup_{x \in[0,1]} \QA{f_x}(\P) -\inf_{x \in [0,1]} \QA{f_x}(\P) \le \max_{u \in \cFlim} \QA{u}(\P) - \min_{l \in \cFlim} \QA{l}(\P) \le d(|\gamma(\P)|).
}
Thus $|\gamma(\TcF(\P))| \le d(|\gamma(\P)|)$ for every $\P \in \calP(I)$. Therefore
\Eq{*}{
\Up_\cF(\P)-\Lo_\cF(\P)=\lim_{n \to \infty} |\gamma( \TcF^n(\P))| \le \lim_{n \to \infty} d^n(|\gamma(\P)|)=0,
}
which proves $\Up_\cF(\P)=\Lo_\cF(\P)$. As $\P$ was taken arbitrarily we obtain $K_\cF:=\Up_\cF=\Lo_\cF$, which by Corollary~\ref{cor:LMU} implies that $\TcF$-invariant mean is uniquely determined.
\end{proof}

Applying this theorem we can easily show the finite counterpart of this result
\begin{cor}
 Let $\cF:=(f_x \colon I \to \R)_{x \in[0,1]}$ be an admissible family which contains finitely many distinct functions. Then there exists a uniquely determined $\TcF$-invariant mean $K_\cF \colon \mathcal{P}(I) \to I$.
\end{cor}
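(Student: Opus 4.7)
The plan is to deduce this corollary as an immediate specialization of the preceding theorem. Since $\cF = (f_x)_{x\in [0,1]}$ takes only finitely many values in $\CM(I)$, I simply take $\cFlim$ to be the (finite) set of distinct functions appearing in the family, viewed as a subset of $\CM(I)$. By admissibility each $f_x$ is continuous and strictly monotone, so $\cFlim \subset \CM(I)$ as required.

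Next I verify the boundedness hypothesis of the previous theorem in the trivial way: for every $x \in [0,1]$, the function $f_x$ itself belongs to $\cFlim$, so I may set $l_x := f_x$ and $u_x := f_x$. Then
\[
\QA{l_x} = \QA{f_x} = \QA{u_x},
\]
and in particular $\QA{l_x} \le \QA{f_x} \le \QA{u_x}$, which is the assumption needed to invoke the theorem.

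Applying the previous theorem then yields a uniquely determined $\TcF$-invariant mean $K_\cF \colon \mathcal{P}(I) \to I$, which is exactly the statement of the corollary. There is no real obstacle here: the whole point is that finiteness of the range of $x \mapsto f_x$ provides, for free, the finite upper/lower bounding family $\cFlim$ required by the theorem, so no further work is needed beyond pointing out the inclusion $\{f_x : x \in [0,1]\} \subseteq \cFlim$.
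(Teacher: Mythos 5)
Your proof is correct and is exactly the specialization the paper intends: the corollary is stated immediately after the theorem with the remark that it follows by "applying this theorem," and taking $\cFlim$ to be the finite set of distinct functions with $l_x = u_x = f_x$ is the obvious way to do so. Nothing further is needed.
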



\begin{thebibliography}{10}

\bibitem{BajPal09b}
Sz. Baják and Zs. Páles.
\newblock {Computer aided solution of the invariance equation for two-variable
  {G}ini means}.
\newblock {\em Comput. Math. Appl.}, 58:334–340, 2009.

\bibitem{BajPal09a}
Sz. Baják and Zs. Páles.
\newblock {Invariance equation for generalized quasi-arithmetic means}.
\newblock {\em Aequationes Math.}, 77:133–145, 2009.

\bibitem{BajPal10}
Sz. Baják and Zs. Páles.
\newblock {Computer aided solution of the invariance equation for two-variable
  {S}tolarsky means}.
\newblock {\em Appl. Math. Comput.}, 216(11):3219–3227, 2010.

\bibitem{BajPal13}
Sz. Baják and Zs. Páles.
\newblock {Solving invariance equations involving homogeneous means with the
  help of computer}.
\newblock {\em Appl. Math. Comput.}, 219(11):6297–6315, 2013.

\bibitem{BorBor87}
J.~M. Borwein and P.~B. Borwein.
\newblock {\em {Pi and the {AGM}}}.
\newblock {Canadian Mathematical Society Series of Monographs and Advanced
  Texts}. John Wiley \& Sons, Inc., New York, 1987.
\newblock A study in analytic number theory and computational complexity, A
  Wiley-Interscience Publication.

\bibitem{Bul03}
P.~S. Bullen.
\newblock {\em {Handbook of means and their inequalities}}, volume 560 of {\em
  {Mathematics and its Applications}}.
\newblock Kluwer Academic Publishers Group, Dordrecht, 2003.

\bibitem{Bur07a}
P.~Burai.
\newblock {A {M}atkowski–{S}utô type equation}.
\newblock {\em Publ. Math. Debrecen}, 70:233–247, 2007.

\bibitem{CarShi69}
G.~T. Cargo and O.~Shisha.
\newblock {A metric space connected with generalized means}.
\newblock {\em J. Approximation Theory}, 2:207–222, 1969.

\bibitem{ChuPalPas19}
J.~Chudziak, Zs. P{\'a}les, and P.~Pasteczka.
\newblock From the {I}ngham--{J}essen property to mixed-mean inequalities.
\newblock {\em arXiv}, arXiv:1909.13769.

\bibitem{Dar05a}
Z.~Daróczy.
\newblock {Functional equations involving means and {G}auss compositions of
  means}.
\newblock {\em Nonlinear Anal.}, 63(5-7):e417–e425, 2005.

\bibitem{DarPal02b}
Z.~Daróczy and Zs. Páles.
\newblock {A {M}atkowski–{S}utô type problem for quasi-arithmetic means of
  order $\alpha$}.
\newblock In Z.~Daróczy and Zs. Páles, editors, {\em {Functional Equations
  — Results and Advances}}, volume~3 of {\em {Adv. Math. (Dordr.)}}, page
  189–200. Kluwer Acad. Publ., Dordrecht, 2002.

\bibitem{DarPal02c}
Z.~Daróczy and Zs. Páles.
\newblock {Gauss-composition of means and the solution of the
  {M}atkowski–{S}utô problem}.
\newblock {\em Publ. Math. Debrecen}, 61(1-2):157–218, 2002.

\bibitem{DarPal03a}
Z.~Daróczy and Zs. Páles.
\newblock {The {M}atkowski–{S}utô problem for weighted quasi-arithmetic
  means}.
\newblock {\em Acta Math. Hungar.}, 100(3):237–243, 2003.

\bibitem{Def31}
B.~de~Finetti.
\newblock {{S}ul concetto di media}.
\newblock {\em Giornale dell' Instituto, Italiano degli Attuarii}, 2:369–396,
  1931.

\bibitem{Gla11b}
D.~Głazowska.
\newblock {A solution of an open problem concerning {L}agrangian mean-type
  mappings}.
\newblock {\em Cent. Eur. J. Math.}, 9(5):1067–1073, 2011.

\bibitem{Gla11a}
D.~Głazowska.
\newblock {Some {C}auchy mean-type mappings for which the geometric mean is
  invariant}.
\newblock {\em J. Math. Anal. Appl.}, 375(2):418–430, 2011.

\bibitem{HarLitPol34}
G.~H. Hardy, J.~E. Littlewood, and G.~Pólya.
\newblock {\em {Inequalities}}.
\newblock Cambridge University Press, Cambridge, 1934.
\newblock (first edition), 1952 (second edition).

\bibitem{Jar07}
J.~Jarczyk.
\newblock {Invariance of weighted quasi-arithmetic means with continuous
  generators}.
\newblock {\em Publ. Math. Debrecen}, 71(3-4):279–294, 2007.

\bibitem{JarJar18}
J.~Jarczyk and W.~Jarczyk.
\newblock {Invariance of means}.
\newblock {\em Aequationes Math.}, 92(5):801–872, 2018.

\bibitem{JarMat06}
J.~Jarczyk and J.~Matkowski.
\newblock {Invariance in the class of weighted quasi-arithmetic means}.
\newblock {\em Ann. Polon. Math.}, 88(1):39–51, 2006.

\bibitem{Kno28}
K.~Knopp.
\newblock {Über {R}eihen mit positiven {G}liedern}.
\newblock {\em J. London Math. Soc.}, 3:205–211, 1928.

\bibitem{Kol30}
A.~N. Kolmogorov.
\newblock {{S}ur la notion de la moyenne}.
\newblock {\em Rend. Accad. dei Lincei (6)}, 12:388–391, 1930.

\bibitem{Mat99b}
J.~Matkowski.
\newblock {Iterations of mean-type mappings and invariant means}.
\newblock {\em Ann. Math. Sil.}, (13):211–226, 1999.
\newblock European Conference on Iteration Theory (Muszyna-Złockie, 1998).

\bibitem{Mat02b}
J.~Matkowski.
\newblock {On iteration semigroups of mean-type mappings and invariant means}.
\newblock {\em Aequationes Math.}, 64(3):297–303, 2002.

\bibitem{Mat05}
J.~Matkowski.
\newblock {Lagrangian mean-type mappings for which the arithmetic mean is
  invariant}.
\newblock {\em J. Math. Anal. Appl.}, 309(1):15–24, 2005.

\bibitem{MatPas1901}
J.~Matkowski and P.~Pasteczka.
\newblock Invariant means and iterates of mean-type mappings.
\newblock {\em Aequationes Math.}, 2019.

\bibitem{MatPal15}
J.~Matkowski and Zs. Páles.
\newblock {Characterization of generalized quasi-arithmetic means}.
\newblock {\em Acta Sci. Math. (Szeged)}, 81(3-4):447–456, 2015.

\bibitem{Nag30}
M.~Nagumo.
\newblock {Über eine {K}lasse der {M}ittelwerte}.
\newblock {\em Japanese J. Math.}, 7:71–79, 1930.

\bibitem{Pas16a}
P.~Pasteczka.
\newblock {Limit properties in a family of quasi-arithmetic means}.
\newblock {\em Aequationes Math.}, 90(4):773–785, 2016.

\bibitem{Pas18b}
P.~Pasteczka.
\newblock {On the quasi-arithmetic {G}auss-type iteration}.
\newblock {\em Aequationes Math.}, 92(6):1119–1128, 2018.

\bibitem{Pas19b}
P.~Pasteczka.
\newblock {Interval-type theorems concerning quasi-arithmetic means}.
\newblock {\em Math. Inequal. Appl.}, 22(2):509–518, 2019.

\bibitem{Pas19a}
P.~Pasteczka.
\newblock {Invariant property for discontinuous mean-type mappings}.
\newblock {\em Publ. Math. Debrecen}, 94(3-4):409–419, 2019.

\end{thebibliography}
\end{document}